\newtheorem{proposition}{Proposition}
\newtheorem*{proposition*}{Proposition}
\renewcommand{\eqref}[1]{Eq.~(\ref{#1})}
\newcommand{\pred}[1]{\boldsymbol{1}[#1]}
\newcommand{\paren}[1]{\left( #1 \right)}
\newcommand{\vertiii}[1]{{\left\vert\kern-0.25ex\left\vert\kern-0.25ex\left\vert #1 
    \right\vert\kern-0.25ex\right\vert\kern-0.25ex\right\vert}}
\newcommand{\mexp}{\mathbb{E}}
\newcommand{\E}{\mathop{\mexp}}
\renewcommand{\P}{\mathbb{P}}
\newcommand{\eps}{\varepsilon}
\newcommand{\beq}{\begin{eqnarray*}}
\newcommand{\eeq}{\end{eqnarray*}}
\newcommand{\beqn}{\begin{eqnarray}}
\newcommand{\eeqn}{\end{eqnarray}}
\newcommand{\ent}[1][]{%
\ifthenelse{\isempty{#1}}{%
\mathrm{H}
}{
\mathrm{H}^{(#1)}
}}
\newcommand{\loch}[1][]{%
\ifthenelse{\isempty{#1}}{%
\mathrm{h}
}{
\mathrm{h}^{(#1)}
}}
\newcommand{\mathe}{\mathrm{e}}
\newcommand{\mathd}{\mathrm{d}}
\newcommand{\hide}[1]{}
\newcommand{\set}[1]{\left\{ #1 \right\}}
\newtheorem*{rep@theorem}{\rep@title}
\newcommand{\newreptheorem}[2]{%
\newenvironment{rep#1}[1]{%
 \def\rep@title{#2 \ref{##1}}%
 \begin{rep@theorem}}%
 {\end{rep@theorem}}}
\title{Decoupling Maximal Inequalities
}
\author{%
  Aryeh Kontorovich \\
  \texttt{karyeh@cs.bgu.ac.il}
  }
\date{}
\begin{document}

\maketitle
\begin{center}\vspace{-1cm}\today\vspace{0.5cm}\end{center}

\begin{abstract}
A {\em maximal inequality}
seeks to estimate $\E\max_i X_i$
in terms of properties of the $X_i$.
When the latter are independent,
the union bound (in its various guises)
can
yield tight upper bounds.
If, however, the $X_i$ are strongly dependent,
the estimates provided by the union bound will be rather loose.
In this note, we show that for non-negative random variables,
pairwise independence suffices for the maximal inequality
to behave comparably to its independent version.
The condition of pairwise independence may be relaxed to
a kind of negative dependence,
and even the latter admits violations --- provided
these are properly quantified.
\end{abstract}

\setcounter{section}{-1}
\section{Prolegomenon}
The key contributions of this note
were published as part of \citet{BlanchardCK24}
and so this note will not be submitted for peer review.
The result I attributed to Pinelis
(Proposition \ref{prop:pinelis})
had been obtained by \citet{PenaLai01}
with the constant $2$
and later improved by \citet{CHOLLETE202351}
to the constant given here.

\section{Motivation}
Maximal inequalities are at the heart of empirical process theory \citep{van2014probability}.
The case of Gaussian processes is well-understood via the celebrated generic chaining technique
\citep{Talagrand2016upper}. There, a key role in the lower
bounds is played Slepian's inequality,
which allows one to approximate a Gaussian process
by an appropriate uncorrelated one.
The absence of a generic analog of Slepian's inequality
--- say, for the kind of Binomal process considered in
\citet{CK22}
--- can be a major obstruction in obtaining tight lower
bounds. Indeed, as Proposition~\ref{prop:pinelis-cont}
below shows, 
for nonnegative $X_i$,
any upper bound on 
$\E\max_i \tilde X_i$,
where $\tilde X_i$
is the ``the independent version'' of $X_i$,
automatically yields an upper bound on
$\E\max_i X_i$.
The reverse direction, of course, fails without
additional structural assumptions. We discover that
pairwise independence suffices for the reverse direction,
and that this condition can be relaxed further.

\section{The Bernoulli case}

Let $X_1,X_2,\ldots,X_n$ 
and
$\tilde X_1,\tilde X_2,\ldots,\tilde X_n$
be two
collections
of 
Bernoulli random variables,
where 
the $\tilde X_i$s are 
mutually
independent (and independent of the $X_i$s),
with $X_i,\tilde X_i\sim\mathrm{Bernoulli}(p_i)$.
Letting
$Z=\sum_{i=1}^n X_i$
and
$\tilde Z=\sum_{i=1}^n \tilde X_i$,
we have
\beq
\E\max_{i\in[n]}X_i=\P(Z>0),
\qquad
\E\max_{i\in[n]}\tilde X_i=\P(\tilde Z>0).
\eeq

\paragraph{Decoupling from above.}
An elegant result of \citet{pinelis22}
(answering our question)
shows that
$\P(Z>0)\lesssim\P(\tilde Z>0)$;
his proof provided for completeness:
\begin{proposition}[Pinelis]
\label{prop:pinelis}
For $c=\mathe/(\mathe-1)$
and $X_i,\tilde X_i,Z,\tilde Z,p_i$ as above,
we have
\beq
\P(Z>0)\le c\P(\tilde Z>0).
\eeq
\end{proposition}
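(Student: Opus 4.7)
The plan is to bound $\P(Z > 0)$ from above using only the union bound (which requires no independence among the $X_i$), bound $\P(\tilde Z > 0)$ from below via the elementary inequality $1 - p \le \mathe^{-p}$, and reduce the claim to a one-variable calculus verification parametrized by $s \eqdef \sum_{i=1}^n p_i$.

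First, a pure union bound (making no use of any dependency structure among the $X_i$) yields
\beq
\P(Z > 0) = \P\paren{\bigcup_{i=1}^n \set{X_i = 1}} \le \min\paren{1,\, \sum_{i=1}^n p_i}.
\eeq
For the independent collection, the estimate $1 - p_i \le \mathe^{-p_i}$ gives
\beq
\P(\tilde Z > 0) = 1 - \prod_{i=1}^n (1 - p_i) \ge 1 - \expo{-\sum_{i=1}^n p_i}.
\eeq
Combining these two bounds reduces the claim to the scalar inequality
\beq
\min(1, s) \le c\paren{1 - \mathe^{-s}}, \qquad s \ge 0,
\eeq
with $c = \mathe/(\mathe - 1)$.

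To verify the scalar inequality, I would split into two cases. For $s \ge 1$, the right-hand side is non-decreasing in $s$ and at $s = 1$ already equals $c(1 - 1/\mathe) = 1$ by the very definition of $c$, so the inequality is immediate. For $s \in [0, 1]$, set $\phi(s) \eqdef c(1 - \mathe^{-s}) - s$; then $\phi(0) = \phi(1) = 0$, while $\phi''(s) = -c\mathe^{-s} < 0$, so $\phi$ is strictly concave on $[0,1]$ and therefore non-negative throughout that interval.

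The substantive step is conceptual rather than computational: one must recognize that the problem decouples along the single parameter $s = \sum_i p_i$, and that the value $c = \mathe/(\mathe-1)$ is pinned down precisely by demanding $\phi(1) = 0$. Tightness of this constant is witnessed by the familiar example $p_i = 1/n$ with $X_i \eqdef \pred{U \in [(i-1)/n, i/n)}$ for a uniform $U$: as $n \to \infty$ the left-hand probability equals $1$ while the right-hand one converges to $1 - 1/\mathe$, saturating the ratio $c$.
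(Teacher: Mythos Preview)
Your proof is correct and follows essentially the same route as the paper: bound $\P(Z>0)$ by $\min\{1,S\}$ via the union bound, bound $\P(\tilde Z>0)$ below by $1-\mathe^{-S}$ via $1-p\le\mathe^{-p}$, and then verify the scalar inequality $\min\{1,s\}\le c(1-\mathe^{-s})$. The paper simply asserts this last inequality, whereas you supply a clean concavity argument for it and also include the tightness example (which the paper gives immediately after its proof).
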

\begin{proof}
Put $M=\P(Z>0)$,
$\tilde M=\P(\tilde Z>0)$
and $S=\sum_{i=1}^n p_i$,
and observe that
\beq
M\le\min\set{S,1}\le c(1-\mathe^{-S}).
\eeq
On the other hand,
\beq
\tilde M=1-\prod_{i=1}^n(1-p_i)
\ge1-\mathe^{-S},
\eeq
whence $M\le c\tilde M$.
\end{proof}

Further, we note that Pinelis's constant
$c=\mathe/(\mathe-1)$
is optimal. Indeed,
consider the case where
$p_i=1/n$, $i\in[n]$, and $\P(Z=1)=1$.
This makes $\P(\tilde Z>0)=1-(1-1/n)^n\to 1-1/\mathe$
as $n\to\infty$.

Despite its elegance,
Proposition~\ref{prop:pinelis}
will likely have limited applications,
since in practice, the techniques for upper-bounding
$\E\max_i X_i$ rely on the union bound and
are insensitive to the dependence structure of $X_i$
--- in which case
the technique employed in upper-bounding
$\E\max_i X_i$ automatically upper-bounds
$\E\max_i \tilde X_i$ as well.

\paragraph{Decoupling from below.}
A more interesting and useful direction
would be to obtain an estimate of the form
$\P(Z>0)\gtrsim\P(\tilde Z>0)$.
Clearly, no such 
dimension-free
estimate can hold without
further assumptions on the $X_i$.
Indeed, 
for a small $\eps>0$,
let
$\P(X_1=X_2=\ldots=X_n=1)=\eps$
and
$\P(X_1=X_2=\ldots=X_n=0)=1-\eps$.
In this case, $\P(Z>0)=\eps$.
On the other hand,
$\P(\tilde Z>0)=1-(1-\eps)^n
=n\eps+O(\eps^2)
$, and so 
$
\P(\tilde Z>0)
/
\P(Z>0)
\to n
$
as $\eps\to0$.
Nor can the ratio exceed $n$,
since
\beq
\E\max_{i\in[n]}\tilde X_i
\le
\sum_{i=1}^n\E \tilde X_i
\le
n\max_{i\in[n]}\E \tilde X_i
=
n\max_{i\in[n]}\E X_i
\le
n\E\max_{i\in[n]} X_i.
\eeq

Let us recall the notion of
pairwise independence. For the Bernoulli case,
it means that
for each $i\neq j\in [n]$, we have
$\E[X_iX_j]=
\E[X_i]\E[X_j]
$. The main result of this note
is that
pairwise independence suffices
for 
$\P(Z>0)\gtrsim\P(\tilde Z>0)$.

\begin{proposition}
\label{prop:main}
Let $X_i,\tilde X_i,Z,\tilde Z,p_i$ 
be as
above,
and assume additionally that the $X_i$
are pairwise independent.
Then
\beq
\P(Z>0) \ge \frac12\P(\tilde Z>0).
\eeq
\end{proposition}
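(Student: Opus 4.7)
The plan is to apply the second moment method (Paley--Zygmund) to $Z$, exploiting pairwise independence to control $\E[Z^2]$, and then to compare the resulting lower bound on $\P(Z>0)$ with an elementary upper bound on $\P(\tilde Z>0)$ via a short case split.

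\paragraph{Step 1: Second moment bound on $\P(Z>0)$.}
Cauchy--Schwarz gives $\E Z = \E[Z\cdot\unit[Z>0]]\le\sqrt{\E Z^2}\sqrt{\P(Z>0)}$, hence
\[
\P(Z>0)\ge\frac{(\E Z)^2}{\E Z^2}.
\]
Set $S=\sum_i p_i$, so $\E Z=S$. Since the $X_i$ are pairwise independent Bernoullis,
\[
\E Z^2=\sum_i p_i+\sum_{i\neq j}p_ip_j=S-\sum_i p_i^2+S^2\le S+S^2.
\]
Substituting, $\P(Z>0)\ge S^2/(S+S^2)=S/(1+S)$.

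\paragraph{Step 2: Upper bound on $\P(\tilde Z>0)$.}
By the union bound and trivially $\P(\tilde Z>0)\le\min\{S,1\}$.

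\paragraph{Step 3: Case split.}
If $S\ge 1$, then $\P(Z>0)\ge S/(1+S)\ge 1/2\ge\tfrac12\P(\tilde Z>0)$. If $S<1$, then $\P(\tilde Z>0)\le S$ and $\P(Z>0)\ge S/(1+S)\ge S/2\ge\tfrac12\P(\tilde Z>0)$. In either case the claim follows.

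\paragraph{Main obstacle.}
There is no serious obstacle: the argument is clean once one recognizes that pairwise independence is exactly the hypothesis needed to drop $\E[X_iX_j]=p_ip_j$ into the second moment. The only ``care'' required is the case split in Step~3, since the bound $S/(1+S)$ degrades linearly in $S$ for small $S$, matching the linear upper bound $\min\{S,1\}$ on $\P(\tilde Z>0)$, while for large $S$ both quantities saturate near $1$. The constant $\tfrac12$ is what falls out of this matching and is what a direct Paley--Zygmund approach naturally delivers; sharpening it would require a different technique.
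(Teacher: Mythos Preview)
Your proof is correct and follows essentially the same approach as the paper: apply Paley--Zygmund to get $\P(Z>0)\ge S/(1+S)$ from pairwise independence, then compare with $\P(\tilde Z>0)$ via a case split at $S=1$. The only cosmetic difference is that the paper keeps the exact expression $\P(\tilde Z>0)=1-\prod_i(1-p_i)$ and verifies the inequality $2S^2\ge(S+S^2)(1-\prod_i(1-p_i))$ by a factorization, invoking $\prod_i(1-p_i)\ge 1-S$ in the $S<1$ case; you instead use the equivalent (and slightly more direct) bound $\P(\tilde Z>0)\le\min\{S,1\}$ up front, which makes the case split immediate.
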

\begin{proof}
By the Paley-Zygmund inequality,\footnote{
We thank Ron Peled for the suggestion of applying
Paley-Zygmund to $Z$.
}
\beq
\P(Z>0) \ge
\frac{(\E Z)^2}{\E[Z^2]}.
\eeq
Now $\E Z=\sum_{i=1}^n p_i$
and, by pairwise independence,
\beqn
\label{eq:Z^2}
\E[Z^2]
=
\sum_{i=1}^n p_i
+
2\sum_{1\le i<j\le n} p_ip_j
=
\sum_{i=1}^n p_i
+
\paren{\sum_{i=1}^n p_i}^2
-
\sum_{i=1}^n p_i^2
\le
\sum_{i=1}^n p_i
+
\paren{\sum_{i=1}^n p_i}^2
.
\eeqn
Hence,
\beq
\frac{(\E Z)^2}{\E[Z^2]}
\ge
\frac{
\paren{\sum_{i=1}^n p_i}^2
}{
\sum_{i=1}^n p_i
+
\paren{\sum_{i=1}^n p_i}^2
}
.
\eeq
On the other hand,
$\P(\tilde Z>0)$ is readily computed:
\beq
\P(\tilde Z>0)
=
1-\prod_{i=1}^n(1-p_i).
\eeq
Therefore, to prove the claim, it suffices to show that
\beq
F(p_1,\ldots,p_n)
:=
2\paren{\sum_{i=1}^n p_i}^2
-
\paren{\sum_{i=1}^n p_i
+
\paren{\sum_{i=1}^n p_i}^2
}
\paren{1-\prod_{i=1}^n(1-p_i)}
\ge0.
\eeq
To this end,\footnote{
This elegant proof that $F\ge0$
is due to D. Berend, who also corrected
a mistake in an earlier, clunkier proof of ours.} we factorize 
$F=SG$,
where
$G=S+P+SP-1$,
$S=\sum_i p_i$
and $P=\prod_i(1- p_i)$.
Thus,
$F\ge0\iff G\ge 0$
and in particular, it suffices to
verify the latter.
Now if $S\ge1$ then obviously $G\ge0$
and we are done.
Otherwise,
since $P\ge1-S$
trivially holds,
we have
$G\ge S(1-S)$.
In this case, $S<1\implies G\ge0$.
\end{proof}

We conjecture that the constant $\frac12$ in Proposition~\ref{prop:main} is not optimal.
For a fixed $n$, define the joint 
pairwise independent
distribution on
$(X_1,\ldots,X_n)$ 
---
conjecturally, an extremal
one for minimizing
$
\P(Z=0)/
\P(\tilde Z>0)
$
---
as follows: $p_i=1/(n-1)$, $i\in[n]$,
$\P(Z=0)=
\frac12-\frac{1}{2(n-1)}
$, and 
$\P(Z=2)=1-\P(Z=0)$.
This makes 
$\P(\tilde Z>0)=1-(1-1/(n-1))^n\to 1-1/\mathe$
as $n\to\infty$.
If our conjecture is correct, the optimal
constant for the lower bound is $c'=\frac{\mathe}{2(\mathe-1)}$, or exactly half of Pinelis's constant.\footnote{
We thank Daniel Berend, Alexander Goldenshluger,
and
Yuval Peres
for raising the question of the constants.
AG 
(and also Omer Ben-Porat)
pointed out a possible connection to
{\em prophet inequalities}
--- 
and in particular, the
Bernoulli selection lemma
in \cite{Correa17}
and
\cite{Esfandiari17},
where some constants related to $c,c'$ appear.
It still appears that Propositions~\ref{prop:pinelis}
and~\ref{prop:main} do not trivially follow from known results.
}

\paragraph{Relaxing pairwise independence.}
An inspection of the proof
shows that we do not actually need
$\E[X_iX_j]=p_ip_j$,
but rather only
$\E[X_iX_j]\le p_ip_j$.
This condition is called {\em negative (pairwise) covariance} \citep{Dubhashi:1998:BBS:299633.299634}.

\section{Positive real case}
In this section, we assume that
$X_1,\ldots,X_n$ are nonnegative
integrable random variables
and the
$\tilde X_1,\ldots,\tilde X_n$
are their independent copies:
each $\tilde X_i$ is distributed identically to $X_i$
and the $\tilde X_i$ are mutually independent.

As a warmup, let us see how 
Proposition~\ref{prop:pinelis}
yields $
\E\max_{i\in[n]}X_i
\lesssim
\E\max_{i\in[n]}\tilde X_i
$:
\begin{proposition}
\label{prop:pinelis-cont}
Let
$X_1,\ldots,X_n$ be nonnegative and
integrable
with independent copies
$\tilde X_i$ as above.
For $c=\mathe/(\mathe-1)$, we have
\beq
\E\max_{i\in[n]}X_i
\le
c\E\max_{i\in[n]}\tilde X_i
.
\eeq
\end{proposition}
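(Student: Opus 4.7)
The plan is to reduce the continuous case to the Bernoulli case (Proposition~\ref{prop:pinelis}) via the standard layer-cake formula. For a nonnegative random variable $Y$, recall that $\E Y = \int_0^\infty \P(Y>t)\,\diff t$. Applying this to $\max_i X_i$ and $\max_i \tilde X_i$, the inequality we want becomes
\beq
\int_0^\infty \P\!\paren{\max_{i\in[n]} X_i > t}\diff t
\le c\int_0^\infty \P\!\paren{\max_{i\in[n]} \tilde X_i > t}\diff t,
\eeq
so it suffices to prove the pointwise-in-$t$ bound on the integrands.

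Fix $t\ge 0$ and introduce the Bernoulli indicators $Y_i(t)=\pred{X_i>t}$ and $\tilde Y_i(t)=\pred{\tilde X_i>t}$. Since the $\tilde X_i$ are mutually independent with $\tilde X_i$ distributed as $X_i$, the $\tilde Y_i(t)$ are mutually independent Bernoullis with the same marginals as the $Y_i(t)$, namely $p_i(t):=\P(X_i>t)$. Thus the pair $(Y_i(t))_{i\in[n]}$ and $(\tilde Y_i(t))_{i\in[n]}$ satisfies exactly the hypotheses of Proposition~\ref{prop:pinelis}, which yields
\beq
\P\!\paren{\max_{i\in[n]} X_i > t}
=\P\!\paren{\sum_{i=1}^n Y_i(t) > 0}
\le c\,\P\!\paren{\sum_{i=1}^n \tilde Y_i(t) > 0}
=c\,\P\!\paren{\max_{i\in[n]} \tilde X_i > t}.
\eeq

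Integrating this inequality over $t\in[0,\infty)$ and invoking the layer-cake formula in both directions completes the argument. The only technical points are measurability (both sides are measurable in $t$ since they are monotone in $t$) and the use of Tonelli/layer-cake, both of which are routine for nonnegative integrable random variables; there is no substantive obstacle beyond the already-established Bernoulli case.
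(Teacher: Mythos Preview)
Your proposal is correct and matches the paper's proof essentially line for line: both apply the layer-cake representation $\E\max_i X_i=\int_0^\infty\P(\max_i X_i>t)\,\mathd t$, introduce the Bernoulli indicators $Y_i(t)=\pred{X_i>t}$ and $\tilde Y_i(t)=\pred{\tilde X_i>t}$, invoke Proposition~\ref{prop:pinelis} pointwise in $t$, and integrate. There is nothing to add.
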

\begin{proof}
For $t>0$ and $i\in[n]$,
put 
$Y_i(t)=\pred{X_i>t}$,
$\tilde Y_i(t)=\pred{\tilde X_i>t}$
and
$Z(t)=\sum_{i=1}^nY_i(t)$,
$\tilde Z(t)=\sum_{i=1}^nY_i(t)$.
Then
\beq
\E\max_{i\in[n]}X_i
&=&
\int_0^\infty
\P\paren{
\max_{i\in[n]}X_i>t
}\mathd t
\\
&=&
\int_0^\infty
\P\paren{
Z(t)>0
}\mathd t
\\
&\le&
c\int_0^\infty
\P\paren{
\tilde Z(t)>0
}\mathd t
\\
&=&
c\int_0^\infty
\P\paren{
\max_{i\in[n]}\tilde X_i>t
}\mathd t
\\
&=&
c\E\max_{i\in[n]}\tilde X_i
.
\eeq
\end{proof}

For pairwise independent $X_i$,
we have a reverse inequality:
\begin{proposition}
\label{prop:main-cont}
Let
$X_1,\ldots,X_n$ be nonnegative and
integrable
with independent copies
$\tilde X_i$ as above.
If additionally the $X_i$
are pairwise independent, then
\beq
\E\max_{i\in[n]}X_i
\ge
\frac12\E\max_{i\in[n]}\tilde X_i
.
\eeq
\end{proposition}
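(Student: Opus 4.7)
The plan is to mimic the layer-cake reduction used in the proof of Proposition~\ref{prop:pinelis-cont}, but apply Proposition~\ref{prop:main} instead of Proposition~\ref{prop:pinelis} at each level set. First, for each $t>0$ and $i\in[n]$, I would introduce the indicators $Y_i(t)=\pred{X_i>t}$ and $\tilde Y_i(t)=\pred{\tilde X_i>t}$, together with $Z(t)=\sum_{i=1}^n Y_i(t)$ and $\tilde Z(t)=\sum_{i=1}^n \tilde Y_i(t)$. Each $Y_i(t)$ is Bernoulli with parameter $p_i(t)=\P(X_i>t)$, and since the $\tilde X_i$ are mutually independent, so are the $\tilde Y_i(t)$ for each fixed $t$.

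The key observation is that pairwise independence of the $X_i$ is inherited by the indicators $Y_i(t)$ at every fixed level $t$: if $X_i$ and $X_j$ are independent, then so are any measurable functions of them individually, and in particular $\pred{X_i>t}$ and $\pred{X_j>t}$. Thus for every $t>0$, the family $\{Y_i(t)\}_{i\in[n]}$ satisfies the hypothesis of Proposition~\ref{prop:main}, giving
\[
\P(Z(t)>0)\ge \tfrac12\,\P(\tilde Z(t)>0).
\]

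Next, I would invoke the layer-cake identity $\E W=\int_0^\infty \P(W>t)\,\mathd t$ for nonnegative $W$, noting that $\{\max_i X_i>t\}=\{Z(t)>0\}$ and $\{\max_i\tilde X_i>t\}=\{\tilde Z(t)>0\}$. Integrating the pointwise Bernoulli bound in $t$ yields
\[
\E\max_{i\in[n]} X_i=\int_0^\infty \P(Z(t)>0)\,\mathd t\ge \tfrac12\int_0^\infty \P(\tilde Z(t)>0)\,\mathd t=\tfrac12\,\E\max_{i\in[n]}\tilde X_i,
\]
which is the claim. Integrability of the $X_i$ ensures all expressions are finite and justifies using the layer-cake representation (alternatively Tonelli, since everything is nonnegative).

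There is essentially no obstacle here beyond the verification that pairwise independence transfers to the indicators at each level, which is immediate. The only subtlety worth remarking on is that we do \emph{not} require a joint ``pairwise independence across levels'' condition --- the Bernoulli bound is applied separately at each $t$, so only the univariate pairwise independence of the $X_i$ is used. This also shows that, as in the Bernoulli case, the hypothesis can be relaxed to negative pairwise covariance of the indicators $\pred{X_i>t}$ for all $t$.
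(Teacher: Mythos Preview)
Your proposal is correct and follows precisely the route the paper takes: the layer-cake representation from the proof of Proposition~\ref{prop:pinelis-cont}, with Proposition~\ref{prop:main} applied at each level $t$ in place of Proposition~\ref{prop:pinelis}. Your explicit verification that pairwise independence of the $X_i$ passes to the indicators $\pred{X_i>t}$, and your closing remark on relaxing to negative covariance of the indicators, match the paper's own observations.
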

\begin{proof}
    The proof is entirely analogous to
    that of Proposition~\ref{prop:pinelis-cont},
    except that Proposition~\ref{prop:main}
    is invoked in the inequality step.
\end{proof}

\paragraph{Relaxing pairwise independence.}
As before, the full strength of pairwise
independence of the $X_i$ is not needed.
The condition $\P(X_i>t,X_j>t)\le \P(X_i>t)\P(X_j>t)$
for all $i\neq j\in[n]$ and $t>0$ would suffice;
it is weaker than 
pairwise
{\em negative upper orthant dependence}
\citep{10.1214/aos/1176346079}.\footnote{
Thanks to Murat Kocaoglu
for this reference.
}

\section{Back to Bernoulli: beyond negative covariance}
What if the Bernoulli $X_i$
do not satisfy the negative
covariance condition
$\E[X_iX_j]\le p_ip_j$?
Proposition~\ref{prop:main} is not directly inapplicable,
but not all is lost.
For $i\neq j\in[n]$, define $\eta_{ij}$ by
\beq
\eta_{ij}=\paren{
\E[X_iX_j]
-
p_ip_j
}_+
\eeq
and put $\eta_{ii}:=0$.
Thus, 
$
\E[X_iX_j]\le p_ip_j+\eta_{ij}
$, and, repeating the calculation in \eqref{eq:Z^2},
\beq
\E[Z^2]
\hide{
=
\sum_{i=1}^n p_i
+
2\sum_{1\le i<j\le n} p_ip_j
=
\sum_{i=1}^n p_i
+
\paren{\sum_{i=1}^n p_i}^2
-
\sum_{i=1}^n p_i^2
}
\le
\sum_{i=1}^n p_i
+
\paren{\sum_{i=1}^n p_i}^2
+
\sum_{i,j\in[n]}\eta_{ij}
.
\eeq

Let us put 
$S=\sum_{i=1}^n p_i$,
$A=S^2$,
$B=
S
+
S^2
$,
$C=\frac12\P(\tilde Z>0)$,
and $H=\sum_{i,j\in[n]}\eta_{ij}$.
Now, for $A,B,C,H\ge 0$, we have
\beq
\frac{A}{B}\ge C
&\implies&
\frac{A}{B+H}\ge C\paren{1-\frac{H}{B+H}}.
\eeq
and 
so 
we obtain a generalization of
Proposition~\ref{prop:main}:
\begin{proposition}
\label{prop:eta}
Let $X_i,\tilde X_i,Z,\tilde Z,p_i,B,H$ 
be as
above.
Then
\beq
\P(Z>0) \ge \frac12\paren{1-\frac{H}{B+H}}\P(\tilde Z>0)
.
\eeq
\end{proposition}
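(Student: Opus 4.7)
The plan is to recycle the Paley--Zygmund argument from Proposition~\ref{prop:main}, replacing its second-moment bound by one that accommodates the $\eta_{ij}$ slack. First I would apply Paley--Zygmund to $Z$ to obtain $\P(Z>0)\ge (\E Z)^2/\E[Z^2]=A/\E[Z^2]$. The definition $\eta_{ij}=(\E[X_iX_j]-p_ip_j)_+$ gives $\E[X_iX_j]\le p_ip_j+\eta_{ij}$, and expanding $\E[Z^2]=\sum_i p_i+\sum_{i\ne j}\E[X_iX_j]$ as in \eqref{eq:Z^2} yields $\E[Z^2]\le S+S^2+H=B+H$ (noting that $\eta_{ii}=0$, so $\sum_{i\ne j}\eta_{ij}=H$). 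Hence $\P(Z>0)\ge A/(B+H)$.

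The second step is to import the inequality $A/B\ge C=\tfrac12\P(\tilde Z>0)$ already established in Proposition~\ref{prop:main} via the factorization $F=SG$ and the case analysis on $S\gtrless 1$. Equivalently, $A\ge BC$, so
\[
\frac{A}{B+H} \;=\; \frac{A}{B}\cdot\frac{B}{B+H} \;\ge\; C\cdot\frac{B}{B+H} \;=\; C\paren{1-\frac{H}{B+H}},
\]
which, after substituting $C=\tfrac12\P(\tilde Z>0)$, is exactly the claimed bound.

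There is no substantive obstacle here: the argument is a direct perturbation of Proposition~\ref{prop:main}, with the excess second-moment mass $H$ absorbed by the elementary identity $B/(B+H)=1-H/(B+H)$. The only worthwhile sanity checks are that the bound degenerates gracefully --- when $H=0$ we recover Proposition~\ref{prop:main} exactly, and as $H\to\infty$ the right-hand side tends to $0$, reflecting the fact that arbitrarily large positive pairwise correlations rule out any nontrivial dimension-free lower bound on $\P(Z>0)/\P(\tilde Z>0)$.
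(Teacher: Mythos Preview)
Your proposal is correct and follows essentially the same approach as the paper: apply Paley--Zygmund, bound $\E[Z^2]\le B+H$ using the $\eta_{ij}$ slack, invoke the inequality $A/B\ge C$ from Proposition~\ref{prop:main}, and then use the elementary implication $A/B\ge C\Rightarrow A/(B+H)\ge C\bigl(1-H/(B+H)\bigr)$. Your writeup is in fact slightly more explicit than the paper's, spelling out the factorization $\frac{A}{B+H}=\frac{A}{B}\cdot\frac{B}{B+H}$ and the sanity checks at the endpoints $H=0$ and $H\to\infty$.
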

When $H\lesssim B$, 
Proposition~\ref{prop:eta}
yields useful estimates.

\section*{Acknowledgments}
I thank Daniel Berend and Victor de la Pe\~na
for helpful discussions.


\end{document}